\documentclass[12pt,reqno]{amsart}
\usepackage{amssymb,latexsym}
\usepackage{graphicx}
\usepackage{verbatim}
\usepackage{fancyhdr,amssymb}
\usepackage{url}		

\calclayout

\theoremstyle{plain}
\numberwithin{equation}{section}
\newtheorem{thm}{Theorem}[section]

\newtheorem{lemma}[thm]{Lemma}

\newtheorem{definition}[thm]{Definition}
 
\newtheorem{conjecture}[thm]{Conjecture}
\newtheorem{corollary}[thm]{Corollary}

\begin{document}
\fancyhead{}
\renewcommand{\headrulewidth}{0pt}
\fancyfoot{}

\setcounter{page}{1}
\title[On the size of $\{1\leq a<n : n|a^2-1, a|n^2-1\}$]
{On the size of $\{a: 1\leq a<n, n|a^2-1, a|n^2-1\}$ for number $n$} 
\author{Srikanth Cherukupally}
\thanks{Email Address: sricheru1214@gmail.com}
\email{sricheru1214@gmail.com}
\begin{abstract} 
   For number $n>1$, let $\mathcal{A}(n) = \{1\leq a<n: n|a^2-1, a|n^2-1 \}$. 
   We show that the size of $\mathcal{A}(n)$ is connected to
   a property concerning integer evaluations of
   Fibonacci-like polynomials. In the process, we prove that $|\mathcal{A}(n)|< \log_2 n$, 
   and establish the average value of  $|\mathcal{A}(n)|$ to be a little above $2$, asymptotically. 
   But the empirical data up to 
   $n<10^7$ indicate that $|\mathcal{A}(n)|\leq 3$, proving which is left as an open issue. \\ \\
    \textbf{Key words}. square-roots of unity, Fibonacci polynomial, prime factor, average value, divisors.
  \end{abstract} 
 
  \maketitle
\section{Introduction} 
\noindent Let $w(x)$ denote the number of distinct prime factors of integer $x$. Let 
$\sigma_0(x)$ denote the number of divisors of $x$. We follow the notation 
$x|y$ to denote that $x$ divides $y$.

We first state known facts from \cite{hardy1, hardy2}.
The number of $1\leq a<n$ such that $n|a^2-1$ is 
\begin{displaymath}
 \left\{ \begin{array}{ll}
         2^{w(n)-1} & \textrm{if} \,\, n = 2n',\, \textrm{$n'$ is odd}  \\
         2^{w(n)}   & \textrm{otherwise}.
         \end{array} \right.
\end{displaymath}
In other words, the number of square roots of $1\pmod{n}$ satisfies the above formula. On the other hand, 
the number of $a$ such that $a|n^2-1$ is given by $\sigma_0(n^2-1)$. We thus obtain an upper bound on 
the size: 
$|\mathcal{A}(n)|\leq min\{2^{w(n)}, \sigma_0(n^2-1)\}$. For any $n>3$, 
both $1$, $n-1\in \mathcal{A}(n)$ and thus $|\mathcal{A}(n)| \geq 2$. 
For $n=p^{\alpha}$, with $p$ prime and $\alpha\geq 1$, $|\mathcal{A}(n)|=2$, since there exist 
only two square roots of $1\pmod{n}$. So, 
only when $n$ is divisible by at least two distinct primes, one may expect 
$\mathcal{A}(n)$ to contain more than two integers.

On the number line, there exist integers with arbitrary large number of prime factors. 
For example, suppose $n$ is the product of the first
$k$ distinct primes. For such $n$, the number of square roots of 
$1\pmod{n}$ is greater than $\sqrt{n}/2$. This reasons out to expect 
$n$ for which $|\mathcal{A}(n)|$ is large, when $w(n)$ attains its 
maximum value , i.e. $\frac{\log n}{\log\log n}$. 
The question of how big $|\mathcal{A}(n)|$ could get is 
the motivational factor for us to 
conduct some experiments on the size of $\mathcal{A}(n)$. Our empirical data

suggest that there is no $n<10^7$ with $|\mathcal{A}(n)|>4$. 
With this empirical observation, we ask the question:  \emph{Is $|\mathcal{A}(n)|\leq 3$, for any $n>1$?}

In this paper, we could prove that $|\mathcal{A}(n)|<\log_2(n)$. This is a weak 
upper bound, if the empirical observation holds true for all $n$. We show that proving $|\mathcal{A}(n)|\leq 3$  is 
equivalent to proving a property related to evaluations of Fibonacci-like polynomials. 
Finally, we prove that the average value of $|\mathcal{A}(n)|$ to be a little above $2$. This result is 
independent of whether the observed size property holds true or not. 
\section{Fibonacci-like polynomials}

Define 
 \begin{eqnarray*}
  G_0(x) & = & 1, \\
  G_1(x) & = & x, \\
  G_i(x) & = & xG_{i-1}(x) - G_{i-2}(x), \, \textrm{for $i\geq 2$}.
 \end{eqnarray*}
 The defined polynomials are like Fibonacci polynomials.  The Fibonacci polynomials are defined as: 
$F_0(x) =1$, $F_1(x) = x$, and $F_i(x)  =  xF_{i-1}(x) + F_{i-2}(x)$, for $i\geq 2$. The difference is that 
 there is minus operator in the recurrence relation of $G_i(x)$. The properties of the defined polynomials 
 differ from that of Fibonacci polynomials. We would like to note one difference with respect to irreducibility.  
\begin{itemize}
\item The polynomial $G_i(x)$, $i\geq 2$, is reducible, since $G_{2r}(x) = G_r^2(x)-G_{r-1}^2(x)$ and 
$G_{2r+1}(x)$ does not have constant term. So, $G_i(x)$, $i\geq 2$, evaluate to composite 
integer at integer values of $x$. 
Whereas, some Fibonacci polynomials evaluate to prime values.   
For example, $F_2(4) = 17$, $F_4(2)= 29$, and $F_6(6)= 53353$, etc. The polynomials satisfy 
: $F_{2i}(x) = F_i^2(x)+F_{i-1}^2(x)$, which indicates that
some of them can be irreducible. For example, $F_2(x) = x^2+1$, $F_4(x)=x^4+3x^2+1$ are irreducible.  
\end{itemize}
In the present context, 
the following property of $G_i(x)$ is important.  
\begin{lemma}\label{prop_eval} 
For any $i\geq 1$, $k\geq 2$,  $G_{i-1}(k) \in \mathcal{A}(G_i(k))$.
\end{lemma}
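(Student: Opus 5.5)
The plan is to reduce both divisibility conditions to a single quadratic invariant of consecutive terms, a Cassini-type identity for the recurrence, and then to check the size condition $1\leq a<n$ by a monotonicity argument. Fix $k\geq 2$ and write $n=G_i(k)$, $a=G_{i-1}(k)$.

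\emph{Step 1 (the invariant).} I will prove by induction on $i\geq 1$ that
\[
G_i(k)^2 - k\,G_i(k)G_{i-1}(k) + G_{i-1}(k)^2 = 1 .
\]
For $i=1$ the left side is $k^2-k\cdot k\cdot 1+1=1$. For the inductive step, substitute $G_{i+1}=kG_i-G_{i-1}$ into $G_{i+1}^2-kG_{i+1}G_i+G_i^2$. Expanding, the $k^2G_i^2$ terms cancel and the expression collapses to $G_i^2-kG_iG_{i-1}+G_{i-1}^2$, which equals $1$ by hypothesis. This is the only real computation, and it is routine. The content of the step is recognizing that the form $x^2-kxy+y^2$ is preserved by the map $(x,y)\mapsto(kx-y,x)$.

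\emph{Step 2 (divisibility).} From the invariant we get
\[
a^2-1 = n(ka-n) \quad\text{and}\quad n^2-1 = a(kn-a).
\]
Hence $n\mid a^2-1$ and $a\mid n^2-1$.

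\emph{Step 3 (range).} I will show by induction that $G_j(k)-G_{j-1}(k)\geq k-1\geq 1$ for all $j\geq 1$, which makes the sequence strictly increasing and positive. The base case is $G_1-G_0=k-1$. For the inductive step,
\[
G_j-G_{j-1}=(k-1)G_{j-1}-G_{j-2}\geq G_{j-1}-G_{j-2},
\]
using $k\geq 2$ and $G_{j-1}\geq 0$. Positivity and the difference bound are carried through the same induction. It follows that $1=G_0\leq a<n$, and also $n\geq k\geq 2>1$, so $\mathcal{A}(n)$ is defined.

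Combining Steps 2 and 3 gives $G_{i-1}(k)\in\mathcal{A}(G_i(k))$. The main point, rather than an obstacle, is finding the invariant in Step 1; once it is in hand, everything else is immediate.
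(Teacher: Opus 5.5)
Your proof is correct and follows the paper's route. The paper also proves by induction that $G_i^2(k)+G_{i-1}^2(k)-1=k\,G_i(k)G_{i-1}(k)$, which is exactly your invariant, and then deduces the two divisibilities; your version reads them off directly from $a^2-1=n(ka-n)$ and $n^2-1=a(kn-a)$, where the paper appeals unnecessarily to coprimality, and it explicitly checks $1\le a<n$, which the paper leaves implicit.
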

\proof{We prove the result using the mathematical induction. Firstly, it can be verified that 
$\frac{G_1(k)^2+G_0(k)^2-1}{G_1(k)G_0(k)} = \frac{G_2(k)^2+G_1(k)^2-1}{G_2(k)G_1(k)} = k$. Suppose, for 
$1\leq j \leq i$,  $\frac{G_j^2(k)+G_{j-1}^2(k)-1}{G_j(k)G_{j-1}(k)}=k$ holds. Then, the 
following computations show that the identity holds for $j= i+1$ also.  
\begin{eqnarray}
 \frac{G_{i+1}^2(k)+G_i^2(k)-1}{G_{i+1}(k)G_i(k)}&  = & \frac{(kG_i(k)-G_{i-1}(k))^2+G_i^2(k)-1}{G_{i+1}(k)G_i(k)×} \nonumber \\
               & = & \frac{k^2G_i^2(k)-kG_1(k)G_{i-1}(k)}{G_{i+1}(k)G_i(k)} \nonumber \\
               & = & k. \nonumber 
\end{eqnarray} 
So, $G_i(k)G_{i-1}(k)$ divides $(G_i^2(k)+G_{i-1}^2(k)-1)$. 
Since $G_i(k)$ and $G_{i-1}(k)$ are co-prime, 
$G_i(k)|G_{i-1}^2(k)-1$, $G_{i-1}(k)|G_i^2(k)-1$. This proves the result. }  \hfill $\Box$ 
\section{Chain of numbers} 
\begin{definition}
For integer $k$, the sequence of numbers: $\langle G_i(k)\rangle_{i\geq 2}$, is called 
the chain corresponding to $k$. It is denoted by $\mathfrak{C}_k$.
\end{definition}

In the the present context, chains $\mathfrak{C}_k$, $k\geq 3$, are relevant. 
By Lemma \ref{prop_eval}, for every $n$ in $\mathfrak{C}_k$, $k\geq 3$, 
$\mathcal{A}(n)$ contains at least one integer other than $1$, $n-1$.  The following 
result proves the converse of this.
\begin{lemma}\label{converse} 
For each $a\in \mathcal{A}(n)$, with 
$a\not =1, n-1$, there exist an unique integer $z$ such that 
$a$, $n$ are consecutive numbers of $\mathfrak{C}_z$. 
\end{lemma}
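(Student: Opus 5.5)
Since $a\in\mathcal{A}(n)$, we have $n\mid a^2-1$ and $a\mid n^2-1$. Because $n\mid a^2-1$, the numbers $a$ and $n$ are coprime, so $an \mid a^2+n^2-1$. Define
\[
z=\frac{a^2+n^2-1}{an},
\]
a positive integer. This is exactly the invariant used in the proof of Lemma \ref{prop_eval}, where $\frac{G_j^2(k)+G_{j-1}^2(k)-1}{G_j(k)G_{j-1}(k)}=k$. Uniqueness of $z$ is then immediate: if $a,n$ are consecutive terms $G_{i-1}(z'),G_i(z')$ of some chain $\mathfrak{C}_{z'}$, that identity forces $z'=\frac{a^2+n^2-1}{an}=z$. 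So the real work is existence: showing that $(a,n)$ actually occurs as a consecutive pair in $\mathfrak{C}_z$.

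For existence I would use a Vieta-jumping descent on the equation $x^2+y^2-zxy=1$. The pair $(x,y)=(n,a)$ is a solution with $1<a<n-1$. Regarding it as a quadratic in $x$, the other root is $x'=za-n=(a^2-1)/n$. This is a nonnegative integer, and $x'<a$ since $a<n$. So $(a,x')$ is again a solution with smaller entries. I would also record the forward direction: if $(G_{i-1}(z),G_{i-2}(z))$ is a solution, then $G_i(z)=zG_{i-1}(z)-G_{i-2}(z)$, so jumping reverses exactly along the recurrence. Iterating the descent strictly decreases the larger entry, so it terminates at a pair with smaller entry $0$. The equation then forces the larger entry to be $1$, giving the pair $(1,0)$. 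The step just before that is $(z,1)=(G_1(z),G_0(z))$.

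Running the recurrence forward from $(G_1(z),G_0(z))$ retraces the descent, so $n=G_i(z)$ and $a=G_{i-1}(z)$ for some $i$. Since $a>1$, the pair is not $(G_1,G_0)$, so $i\geq 2$ and both $a,n$ lie in $\mathfrak{C}_z$ in the sense of consecutive terms. It remains to check that $z\geq 3$, so that the chain is a genuine one. If $z=2$, then $(n-a)^2=1$, so $a=n-1$, which is excluded. If $z=1$, then $a^2+n^2-an=1$, which is impossible for $n>a>1$. The case $z\le 0$ cannot occur because $z$ is positive.

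I expect the main obstacle to be the careful bookkeeping in the descent. One must verify at each step that the new root is a nonnegative integer strictly smaller than the previous smaller entry; this uses $x'=(y^2-1)/x$ together with $y<x$. One must also confirm that the descent ends exactly at $(1,0)$ and not at some degenerate pair. Minor edge cases, such as the indexing of $G_i$ in the definition of $\mathfrak{C}_k$, which starts at $i\ge2$, need a brief comment, since $a$ could equal $G_1(z)=z$.
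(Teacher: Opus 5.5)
Your proof is correct and follows the paper's argument: set $z=(a^2+n^2-1)/(an)$, run the Euclidean-style descent $n\mapsto za-n=(a^2-1)/n$ down to $(z,1)$, then read the chain forward. You also supply details the paper only asserts: that $z\ge 3$, that each jump is a nonnegative integer below the smaller entry, that the descent ends at $(1,0)$, and that $z$ is unique via the invariant.

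Your edge-case worry is justified. When $a=G_1(z)=z$ (e.g.\ $n=8$, $a=3$, $z=3$), the statement holds only if $G_1(z)$ is counted in $\mathfrak{C}_z$, because the paper's chain starts at $G_2$.
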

\proof{For $a\in \mathcal{A}(n)$, $\frac{n^2+a^2-1}{an}=z$ for some integer $z$. 
Clearly, $z$ is unique and is greater than 2. By rewriting the identity, 
we have $\frac{a^2-1}{n} = za-n$. Thus,  $n = za-r$ for some integer $r<a$, with   
$\frac{a^2+r^2-1}{ar} = z$. Thus, from the pair $(a,n)$,  we obtain 
an unique pair $(r,a)$ of smaller integers such that $a|r^2-1$, $r|a^2-1$. 
Similarly, from the pair $(r,a)$, we obtain another pair 
$(r,r_1)$, with $r_1<r$,  
 such that $r|r_1^2-1$ and $r_1|r^2-1$. 
This procedure of obtaining successively pairs of smaller integers is same as 
the classical Euclidean algorithm for finding gcd of two numbers. The procedure 
eventually ends in the pair $(1,z)$, since $a$, $n$ are co-prime. 
Hence, $n$ belongs to $\mathfrak{C}_z$. } \hfill $\Box$
\begin{corollary}\label{cor}
 The number of distinct chains $\mathfrak{C}_k$, $k\geq 2$, 
 in which integer $n>2$ appears is equal to $|\mathcal{A}(n)|-2$. 
 \end{corollary}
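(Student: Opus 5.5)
We prove the corollary by setting up a bijection between $\mathcal{A}(n)\setminus\{1,n-1\}$ and the set of chains $\mathfrak{C}_k$, $k\geq 2$, that contain $n$. Counting both sides then gives $|\mathcal{A}(n)|-2$ (for $n>2$ we have $1\neq n-1$, so exactly two elements are removed). The forward map comes from Lemma \ref{converse}. For each $a\in\mathcal{A}(n)$ with $a\neq 1,n-1$ we send $a$ to the unique $z=\frac{n^2+a^2-1}{an}$, and that lemma says $a,n$ are consecutive terms of $\mathfrak{C}_z$. In particular $n\in\mathfrak{C}_z$ with $n=G_i(z)$ and $a=G_{i-1}(z)$ for some $i\geq 2$.

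For the inverse map, suppose $n\in\mathfrak{C}_k$, say $n=G_i(k)$ with $i\geq 2$. Lemma \ref{prop_eval} gives $a:=G_{i-1}(k)\in\mathcal{A}(n)$. We must check three things. First, the index $i$ is determined by $k$. This holds because $G_i(k)$ is strictly increasing in $i$ for $k\geq 2$: from $G_i-G_{i-1}=(k-2)G_{i-1}+(G_{i-1}-G_{i-2})\geq G_{i-1}-G_{i-2}$ and $G_1-G_0=k-1\geq 1$, induction gives monotonicity. So $a$ is well defined. Second, $a\neq 1,n-1$. Here $a=G_{i-1}(k)\geq G_1(k)=k\geq 2$. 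Also $n-a=G_i-G_{i-1}$, which equals $1$ only when $k=2$, since for $k=2$ we get $G_i(2)=i+1$. Third, the identity $\frac{G_i^2+G_{i-1}^2-1}{G_iG_{i-1}}=k$, established in the proof of Lemma \ref{prop_eval}, shows that the forward map sends $a$ back to $k$. The uniqueness in Lemma \ref{converse} shows the composition in the other order is also the identity. So the two maps are mutually inverse.

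The main obstacle is the degenerate chain $k=2$. There $\mathfrak{C}_2=\{3,4,5,\dots\}$ contains every $n>2$, and it corresponds to the excluded element $a=n-1$, since $(n^2+(n-1)^2-1)/(n(n-1))=2$. The count therefore only works if the chains $\mathfrak{C}_k$ are taken with $k\geq 3$, matching the paper's remark that the relevant chains are $k\geq3$ and that Lemma \ref{converse} yields $z>2$. I would state the bijection with $k\geq 3$ and remark that including $k=2$ would simply add one chain, corresponding to $a=n-1$. With that convention, $a\neq n-1$ holds automatically, because $G_i(k)-G_{i-1}(k)\geq k-1\geq 2$.
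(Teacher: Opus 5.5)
Your argument is correct and follows the paper's route: Lemma \ref{converse} attaches a unique chain to each $a\in\mathcal{A}(n)\setminus\{1,n-1\}$. You also build the inverse map via Lemma \ref{prop_eval} and the strict monotonicity of $G_i(k)$ in $i$, which supplies the injectivity and surjectivity that the paper's one-line proof leaves implicit.

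Your caveat is also right. $\mathfrak{C}_2=\langle 3,4,5,\dots\rangle$ corresponds to $a=n-1$, so with $k\geq 2$ as printed the count is off by one for every $n>2$. For example, $8$ lies in $\mathfrak{C}_2$ and $\mathfrak{C}_3$, but $|\mathcal{A}(8)|-2=1$. The corollary holds exactly with $k\geq 3$, which is how the paper uses it afterwards.
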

\proof{By Lemma \ref{converse}, for each $a\in \mathcal{A}(n)$, with $a\not\in\{1,n-1\}$, 
$n$ appears in an unique chain. The result follows. } \hfill $\Box$ \\

By Corollary \ref{cor}, an integer $n$ with $|\mathcal{A}(n)|>3$ appears 
in more than one chain. Conversely, if $n$ appears in two distinct chains, 
then $|\mathcal{A}(n)|>3$. Thus, the following two statements are equivalent. 
\begin{enumerate}
 \item For any $n>1$, $|\mathcal{A}(n)|\leq 3$. 
\item For any two integers $k_1>k_2\geq 3$, chains $\mathfrak{C}_{k_1}$ and $\mathfrak{C}_{k_2}$ 
  do not share any integer. 
\end{enumerate}

Believing that the size property holds for any $n>1$, we put forth it as an open question. 
\begin{conjecture}\label{ques}
  For any $n>1$, $|\mathcal{A}(n)|\leq 3$. 
\end{conjecture} 
Using the properties of $G_i(x)$, we prove that $|\mathcal{A}(n)|< \log_2 n$.
\begin{lemma}\label{elem_prop}
The polynomials $G_i$ satisfy the following three properties, for any $i,k>1$,
\begin{enumerate}
 \item  $G_{i+1}(k)>G_i(k)$
 \item $G_i(k+1)>G_i(k)$ 
 \item $G_i(k)> (k-1)^i$
\end{enumerate}
\end{lemma}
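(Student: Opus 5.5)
Our approach is induction on $i$, with $k\geq 2$ fixed, applied to the recurrence $G_i(k)=kG_{i-1}(k)-G_{i-2}(k)$. The key quantity is the gap $D_i := G_i(k)-G_{i-1}(k)$. We have $D_1 = k-1\geq 1$, and the recurrence gives
\[
D_{i+1} = (k-2)G_i(k) + D_i .
\]
By induction, $D_i\geq 1$ and $G_i(k)\geq 1$ for all $i\geq 1$. In fact, if $D_i\ge 1$ and $G_{i-1}(k)\geq 1$, then $G_i(k)\geq 2$ and $D_{i+1}\geq D_i\geq 1$. This settles property (1).

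For property (3), I would use the sharper inequality
\[
G_i(k) \geq (k-1)G_{i-1}(k),
\]
which follows from
\[
G_i(k)-(k-1)G_{i-1}(k) = G_{i-1}(k)-G_{i-2}(k) = D_{i-1} > 0,
\]
using (1). Iterating from $G_1(k)=k>k-1$ gives $G_i(k)>(k-1)^i$. The inequality is strict already at $i=1$, and the strict step propagates.

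For property (2), I compare $H_i:=G_i(k+1)$ with $G_i(k)$ by induction on the difference $E_i := H_i - G_i(k)$. We have $E_0=0$ and $E_1=1$, and
\[
E_i = (k+1)H_{i-1} - H_{i-2} - kG_{i-1}(k) + G_{i-2}(k) = kE_{i-1} + H_{i-1} - E_{i-2}.
\]
Since $k\geq 2$, it is enough to show $E_{i-1}\geq E_{i-2}\geq 0$, which is again a monotonicity statement. So I would strengthen the induction hypothesis to "$E_i\geq E_{i-1}\geq 0$". Then $E_i \geq 2E_{i-1}-E_{i-2}+H_{i-1} \geq E_{i-1}+H_{i-1} > E_{i-1}$, because $H_{i-1}\geq 1$ by the positivity established in (1). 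This gives $E_i>0$ for $i\geq1$.

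The main (mild) obstacle is that the minus sign in the recurrence prevents naive positivity arguments. Every step therefore has to carry an auxiliary monotonicity hypothesis ($D_i>0$, $E_i\geq E_{i-1}$) through the induction. Once those strengthened hypotheses are chosen, each step is a one-line computation. The statement says $i,k>1$, but the inductions start at $i=1$, which only makes the claims stronger.
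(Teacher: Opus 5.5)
Your proof is correct and follows the paper's route. Property (3) comes from $G_i(k)-(k-1)G_{i-1}(k)=G_{i-1}(k)-G_{i-2}(k)>0$, which is the paper's ratio argument written as a difference, and property (2) is an induction on $G_i(k+1)-G_i(k)$ through the recurrence. There are two small differences:
\begin{enumerate}
\item You actually prove (1) via $D_{i+1}=(k-2)G_i(k)+D_i$, whereas the paper only asserts it.
\item In (2) you carry the stronger hypothesis $E_{i-1}\geq E_{i-2}$. The paper instead regroups $H_{i-1}-E_{i-2}=(H_{i-1}-H_{i-2})+G_{i-2}(k)$ and applies (1) at $k+1$, so it only needs $E_{i-1}\geq 0$.
\end{enumerate}
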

\proof{ Property 1 follows directly from the definition of $G_i$. We use the mathematical induction to prove Property 2. 
Let us fix a value of $k$. Suppose $G_j(k+1) >G_j(k)$, for $0\leq j\leq i$. Then, the property holds for $j= i+1$ also 
from the following computation:  
\begin{eqnarray}
 G_{i+1}(k+1) - G_{i+1}(k) &= & k\big(G_i(k+1)-G_i(k)\big) + \big(G_i(k+1)-G_{i-1}(k+1)\big)+G_{i-1}(k) \nonumber \\
                           & > & 0.   \nonumber 
\end{eqnarray} 
By the definition, $\frac{G_i(k)}{G_{i-1}(k)} = k - \frac{G_{i-2}(k)}{G_{i-1}(k)}$. By Property 1, 
$\frac{G_{i-2}(k)}{G_{i-1}(k)}<1$. Thus, we obtain $G_i(k) > (k-1)G_{i-1}(k)$.  This proves Property 3.  
} \hfill $\Box$ 

\begin{lemma}\label{upper}
 The number of chains $\mathfrak{C}_k$ with $k\geq 3$ in which integer $n$ appears is $< \log_2(n)$.
\end{lemma}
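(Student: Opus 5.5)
The plan is to bound, for each chain $\mathfrak{C}_k$ ($k\geq 3$) that contains $n$, the parameter $k$ in terms of the position of $n$ in that chain. We then count the admissible pairs $(k,i)$ with $G_i(k)=n$.

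First, within a single chain $\mathfrak{C}_k$ the integer $n$ occurs at most once, because $G_i(k)$ is strictly increasing in $i$ by Property 1 of Lemma \ref{elem_prop}. So each chain containing $n$ determines a unique index $i\geq 2$ with $G_i(k)=n$. Second, for a fixed index $i$ the map $k\mapsto G_i(k)$ is strictly increasing by Property 2 of Lemma \ref{elem_prop}. Hence for each $i$ there is at most one $k$ with $G_i(k)=n$. It follows that the number of chains containing $n$ is at most the number of indices $i\geq 2$ for which some $k\geq 3$ satisfies $G_i(k)=n$.

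Next we bound the admissible $i$. Property 3 gives $n=G_i(k)>(k-1)^i\geq 2^i$ since $k\geq 3$, so $i<\log_2 n$. The admissible indices therefore lie in $\{2,3,\dots,\lceil\log_2 n\rceil-1\}$, a set of at most $\log_2 n-1$ elements, which is $<\log_2 n$.

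Two details need checking. The first is that Property 3 is valid for $i\geq 2$. The proof in Lemma \ref{elem_prop} gives $G_i(k)>(k-1)G_{i-1}(k)$, and iterating down to $G_1(k)=k>k-1$ yields $G_i(k)>(k-1)^i$; this is fine. The second is the strictness at the endpoint, i.e. that the count of $i$ with $2\leq i<\log_2 n$ is strictly less than $\log_2 n$. This is immediate because $i$ starts at $2$.

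The main obstacle is confirming that two distinct chains cannot contain $n$ at the same index. This is exactly where Property 2 enters, together with the uniqueness of $z$ in Lemma \ref{converse}. If the author intends chains to include $G_1(k)=k$, we would redo the count allowing $i\geq 1$. That adds at most one index, but $2^i<n$ still leaves the count of $i$ with $1\leq i<\log_2 n$ strictly below $\log_2 n$, so the conclusion survives.
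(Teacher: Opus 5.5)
Your proof is correct and is essentially the paper's argument. Distinct chains must place $n$ at distinct indices: you get this injectivity directly from Property 2, while the paper gets it by showing the index strictly decreases as $k$ grows, using Properties 1 and 2. Property 3 then forces every such index below $\log_2 n$; your appeal to the uniqueness of $z$ in Lemma \ref{converse} is not needed for this.
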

\proof{Suppose $n$ appears in $\mathfrak{C}_k$ for some smallest $k\geq 3$. Let $n = G_i(k)$ for some $i$.  
Suppose $n$ also appears in $\mathfrak{C}_{k'}$ where $k'$ is the smallest integer $>k$. Then, due to 
Property 1 and 2 of Lemma \ref{elem_prop}, $n = G_{j}(k^{'})$ for some $j<i$. Thus, 
$n$ can appear in at most $i$ distinct chains. By Property 3 of Lemma \ref{elem_prop},  
$n>(k-1)^i$, which implies that $i<\log_{k-1}n<\log_2n$. } \hfill $\Box$

\section{Average value of $|\mathcal{A}(n)|$}
\noindent Let $T_x$ represent the number of $n\leq x$ 
for which $|\mathcal{A}(n)|\geq 3$. There are totally $\lfloor\sqrt{x+1} \rfloor-2$ 
distinct chains $\mathfrak{C}_k$ in which numbers less than $x$ appear. Thus, 
$T_x$ is the total number of integers less than $x$ that appear in all $\lfloor\sqrt{x+1} \rfloor-2$ chains.  
\begin{eqnarray}
 T_x & = &  \sum_{k=3}^{\lfloor\sqrt{x}\rfloor} N(x,k),   
 \end{eqnarray}
 where $N(x,k)$ is the number of integers less than $x$ that appear in chain $\mathfrak{C}_k$. By Lemma 
 \ref{upper}, $N(x,k) < \log_{k-1}x$.  Thus, $T_x < (\sqrt{x+1}-2) \log_2x$, which is 
 a loose upper-bound but will serve our purpose here.  

 Consider the average value  
\begin{eqnarray*}
B & = & \frac{1}{x}\sum_{n=1}^x |\mathcal{A}(x)| \\
  & = & \frac{1}{x} [2(x-T_x) + \sum_{i=3}^{\log_2 x} iT_i(x)], 
\end{eqnarray*}
where $T_i(x)$ is the number of $n\leq x$ such that $
|\mathcal{A}(n)|=i$.  We have $\sum_{i=3}^{\log_2 x} T_i(x) = T_x$.  Thus, 
\begin{displaymath}
B < 2 + (\log_2(x)-2)\frac{T_x}{x}.  
\end{displaymath}

So, the average value of $|\mathcal{A}(n)|$ is a little above $2$. This result is 
independent of the validity of Conjecture \ref{ques}. \\

We note that our empirical data on $|\mathcal{A}(n)|$ (for $n < 10^7$)
was produced using the property proved in Lemma 3.2, thus avoiding explicit factorization of integers $n^2-1$ and $a^2-1$, which is a hard computational problem of some integer instances.


\begin{thebibliography}{99}
\bibitem{hardy1}
G. H. Hardy and S. Ramanujan, 
\emph{The Normal Number of Prime Factors of a Number n}, Quarterly Journal of Mathematics, 
\textbf{48} (1917), 76--92.  
\bibitem{hardy2}
G. H. Hardy and E. M. Wright, \emph{An Introduction to the Theory of
Numbers}, 5th ed. Oxford, U.K., 1979.

\end{thebibliography}
\end{document}